\newcommand\extrafootertext[1]{%
	\bgroup
	\renewcommand\thefootnote{\fnsymbol{footnote}}%
	\renewcommand\thempfootnote{\fnsymbol{mpfootnote}}%
	\footnotetext[0]{#1}%
	\egroup
}
\theoremstyle{plain}
\newtheorem{theorem}{Theorem}[section]
\newtheorem{lemma}[theorem]{Lemma}
\newtheorem{proposition}[theorem]{Proposition}
\theoremstyle{definition}
\newtheorem{definition}[theorem]{Definition}
\newtheorem{remark}[theorem]{Remark}
\newtheorem*{remark*}{Remark}
\newtheorem{example}[theorem]{Example}
\newcommand*{\ie}{{\it i.e.}}
\newcommand*{\eg}{{\it e.g.}}
\newcommand*{\R}{\mathbb{R}} 
\newcommand*{\Z}{\mathbb{Z}} 
\newcommand*{\norm}[1]{\left\lVert#1\right\rVert} 
\newcommand*{\bdddr}{{\widehat{H}}} 
\newcommand*{\bddforms}{{\widehat{\Omega}}} 
\title{Bounded differential forms and coinvariants of bounded functions}
\author{Francesco Milizia\\[-1pt]
	\footnotesize{\textit{Scuola Normale Superiore, Palazzo della Carovana,}}\\[-3pt]
	\footnotesize{\textit{Piazza dei Cavalieri, 7, 56126 Pisa, IT}}\\[-1pt]
	\footnotesize{\texttt{francesco.milizia@sns.it}}}
\date{}
\begin{document}
	\maketitle
	
	\begin{abstract}
		Given a group $G$ acting cocompactly on a smooth manifold $M$ by deck transformations, there is an integration map, defined recently by Kato, Kishimoto and Tsutaya, from the top-degree bounded de Rham cohomology of $M$ to the coinvariants $\ell^\infty(G)_G$.
		We generalize its definition and show that it is an isomorphism.
		In the presence of boundary, a relative version of bounded de Rham cohomology is considered.
	\end{abstract}
	
	\extrafootertext{The author has been supported by INdAM, GNSAGA project CUP E55F22000270001.}
	
	\section{Introduction}
	Let $M$ be a smooth oriented manifold of dimension $n$, endowed with a free, properly discontinuous and cocompact action of a group $G$ by diffeomorphisms.
	In \cite{KKT22}, Kato, Kishimoto and Tsutaya consider a linear map
	\[
		\bddforms^n(M) \to \ell^\infty(G)
	\]
	sending bounded differential $n$-forms on $M$ to bounded functions from $G$ to $\R$.
	This is done by fixing a fundamental domain for the action $G\curvearrowright M$ and integrating differential forms on the $G$-translates of the fundamental domain.
	Boundedness of differential forms is to be understood with respect to a $G$-invariant Riemannian metric on $M$, whose choice is not important since $G\curvearrowright M$ is cocompact.
	As a consequence of a Stokes-type theorem, they observe that if $M$ is connected and has no boundary then the map above induces a linear map
	\begin{equation}\label{eq:bdddrmap}
		\bdddr^n(M) \to \ell^\infty(G)_G
	\end{equation}
	from the top-degree bounded de Rham cohomology of $M$ to the module of coinvariants of $\ell^\infty(G)$ (see \Cref{sec:definition} for the definitions).
	They use this map to define the index of a bounded vector field on $M$, as an element of $\ell^\infty(G)_G$, and obtain a version of the Poincaré-Hopf theorem for bounded vector fields.
	They also notice that the map in \eqref{eq:bdddrmap} is surjective, and ask whether it is an isomorphism \cite[Conjecture 1.5]{KKT22}.
	To support this claim, they show that it holds when $G$ is finite and for $G = \Z$ acting on the real line by translations.
	Moreover, they remark that from \cite{BW1992} and \cite{AttieBlock1998} we know that $\bdddr^n(M)$ and $\ell^\infty(G)_G$ are both isomorphic to the uniformly finite homology $H_0^{\rm uf}(G)$, so we already know that some isomorphism should exist. However, they say that ``this isomorphism is not explicit as it is given by a zig-zag of several isomorphisms''.
	The purpose of this note is twofold:
	\begin{itemize}
		\item We discuss more carefully the definition of the map \eqref{eq:bdddrmap}, generalizing it for manifolds with boundary and showing its independence from the choice of a fundamental domain, an issue which is not addressed in \cite{KKT22};
		\item We answer the question of Kato, Kishimoto and Tsutaya showing that the map \eqref{eq:bdddrmap} is an isomorphism --- also in the case with boundary: see \Cref{thm:isomorphism}.
	\end{itemize}
	
	\paragraph{Acknowledgements.}
	I thank my PhD advisor Roberto Frigerio for having read the preliminary version of this document.
	
	\section{Definition of the integration map}\label{sec:definition}
	Throughout the article, $M$ is a smooth oriented manifold of dimension $n$, possibly with boundary.
	For the moment, we do not assume $M$ to be connected (and generally it won't be compact either).
	As in the introduction, we assume that $M$ is endowed with a left action by diffeomorphisms of a group $G$, and that this action is free, properly discontinuous and cocompact, so that the quotient $G\backslash M$ is a compact smooth manifold (not necessarily orientable).
	In this situation $G$ is always finitely generated, and is finite if and only if $M$ is compact.
	
	\subsection{Coinvariants of bounded functions}
	We denote by $\ell^\infty(G)$ the real vector space of bounded real-valued functions from $G$ to $\R$.
	It is endowed with the left action of $G$ given by the formula
	\begin{align*}
		(g \cdot f)(h) = f(hg) && \forall f \in \ell^\infty(G) && \forall g,h \in G.
	\end{align*}
	The module of coinvariants, denoted by $\ell^\infty(G)_G$, is the quotient of $\ell^\infty(G)$ by the linear subspace generated by the elements of the form $f - g\cdot f$, with $f\in\ell^\infty(G)$ and $g\in G$.
	
	\begin{example}
		If $G$ is finite, then $\ell^\infty(G)_G$ is one-dimensional, because the functions of the form $f - g\cdot f$ generate the subspace of $\ell^\infty(G)$ consisting of functions whose sum over $G$ is $0$.
		There are groups for which $\ell^\infty(G)_G = 0$; in fact, this is the case if and only if $G$ is \emph{not} amenable \cite[Theorem 3.1]{BW1992}.
		If $G$ is infinite and amenable, then $\ell^\infty(G)_G$ is infinite dimensional (see, \eg, \cite{BlankDiana2015} and \cite[Section 6.7]{Blank2015}).
	\end{example}
	
	\subsection{Bounded de Rham cohomology}\label{subsec:b_deRham}
	Bounded de Rham cohomology can be defined for any Riemannian manifold.
	The metric is needed in order to define what it means for a differential form to be bounded.
	In our setting we can pick any $G$-invariant Riemannian metric; by cocompactness of the action $G\curvearrowright M$, the choice is irrelevant.
	
	\begin{definition}
		Let $\omega \in \Omega^k(M)$ be a differential form.
		Its sup norm is defined as
		\[ \norm{\omega}_\infty = \sup_{x \in M}\left\{\sup_{v_1, \dots, v_k \in T^1_xM} \omega(x)(v_1, \dots, v_k)\right\} \in [0,+\infty],\]
		where $v_1,\dots,v_k$ vary in the unit sphere of the tangent space at $x \in M$.
		The form $\omega$ is \emph{bounded} if $\norm{\omega}_\infty < +\infty$.
		We denote by $\bddforms^k(M)$ the vector space of differential $k$-forms $\omega \in \Omega^k(M)$ that are bounded and whose exterior derivative $d\omega \in \Omega^{k+1}(M)$ is bounded.
	\end{definition}
	Notice that $\bddforms^*(M)$ is a subcomplex of $\Omega^*(M)$, since the exterior differential sends $\bddforms^k(M)$ to $\bddforms^{k+1}(M)$.
	\begin{definition}
		The cohomology of $\bddforms^*(M)$, denoted by $\bdddr^*(M)$, is the \emph{bounded de Rham cohomology} of $M$.
	\end{definition}
	Bounded de Rham cohomology is studied, \eg, in \cite{AttieBlock1998} and \cite{Mil2021}, where it is shown to be isomorphic to $H^*(G\backslash M;\ell^\infty(G))$.
	We also consider a version relative to the boundary.
	\begin{definition}
		Let $\bddforms^k(M,\partial M) \subseteq \bddforms^k(M)$ be the subspace consisting of forms $\omega\in\bddforms^k(M)$ whose pull-back to $\partial M$ is $0$.
		This defines a further subcomplex of $\Omega^*(M)$, and we denote its cohomology by $\bdddr^*(M,\partial M)$.
	\end{definition}
	
	\subsection{Integration of bounded forms}
	Recall that we denote by $n$ the dimension of $M$, so that any $n$-form on $M$ vanishes on $\partial M$.
	We now define a linear map
	\[
		\bddforms^n(M,\partial M) = \bddforms^n(M) \to \ell^\infty(G).
	\]
	In \cite{KKT22}, the auxiliary device used to define such a map is a fundamental domain for $G\curvearrowright M$.
	Here, we take a more general perspective and consider, instead, a function in $L^1(M)$.
	\begin{definition}\label{def:intphi}
		For every $\varphi \in L^1(M)$, we define
		\[
			\int_M^\varphi: \bddforms^n(M) \to \ell^\infty(G)
		\]
		setting, for every $\omega \in \bddforms^n(M)$ and $g \in G$,
		\[
			\left(\int_M^\varphi\omega\right)(g) = \int_M \varphi \cdot g^*\omega.
		\]
	\end{definition}
	To recover the definition in \cite{KKT22}, one can take $\varphi$ to be the characteristic function of a fundamental domain.
	
	\begin{remark}\label{rm:L1action}
		The group $G$ acts on $L^1(M)$ in the usual way: $g \cdot \varphi (x) = \varphi(g^{-1}x)$.
		For every $\omega \in \bddforms^n(M)$, it holds
		\begin{equation}\label{eq:L1action}
			\left(\int_M^{g\cdot \varphi}\omega\right) = g \cdot \left(\int_M^\varphi\omega\right) \in \ell^\infty(G).
		\end{equation}
	\end{remark}
	
	\begin{lemma}\label{lemma:phiindep}
		Let $\varphi$ be a function in $L^1(M)$ with compact support such that, for almost every $x \in M$,
		\begin{align*}
			\sum_{g \in G} \varphi(gx) = 0.
		\end{align*}
		Then, for every $\omega \in \bddforms^n(M)$, $\int_M^\varphi\omega$ is trivial in $\ell^\infty(G)_G$.
	\end{lemma}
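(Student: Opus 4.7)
The plan is to decompose $\varphi$ as a finite sum of ``coboundaries'' $g\cdot\eta - \eta$ with $\eta\in L^1(M)$ compactly supported, and then combine \Cref{rm:L1action} with the obvious linearity of $\varphi\mapsto\int_M^\varphi\omega$.

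First I would fix a compact fundamental domain $D\subseteq M$ for the action $G\curvearrowright M$ (with $\partial D$ of measure zero). Since $\varphi$ has compact support and the action is properly discontinuous, only finitely many $G$-translates of $D$ meet $\operatorname{supp}(\varphi)$, so there is a finite set $F\subseteq G$ such that $\operatorname{supp}(\varphi)\subseteq \bigcup_{h\in F}hD$. Setting $\psi_h := h^{-1}\cdot(\varphi\cdot\chi_{hD})\in L^1(M)$, each $\psi_h$ is supported in $D$ and
\[
\varphi = \sum_{h\in F} h\cdot\psi_h
\]
as a finite sum in $L^1(M)$.

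Next I would translate the hypothesis. For almost every $x\in D$, the condition $gx\in hD$ with $x\in D$ forces $g=h$, so the global sum $\sum_{g\in G}\varphi(gx)$ collapses to $\sum_{h\in F}\psi_h(x)$. The hypothesis therefore amounts to $\sum_{h\in F}\psi_h = 0$ in $L^1(D)$. Fixing any $h_0\in F$, subtracting the vanishing quantity $\sum_{h\in F} h_0\cdot\psi_h$ from the expression for $\varphi$ gives
\[
\varphi = \sum_{h\in F\setminus\{h_0\}}\bigl(h\cdot\psi_h - h_0\cdot\psi_h\bigr) = \sum_{h\in F\setminus\{h_0\}}\bigl((hh_0^{-1})\cdot\eta_h - \eta_h\bigr),
\]
where $\eta_h := h_0\cdot\psi_h\in L^1(M)$ is compactly supported.

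Finally, applying $\int_M^{(-)}\omega$ term by term and using \Cref{rm:L1action} yields
\[
\int_M^\varphi\omega = \sum_{h\in F\setminus\{h_0\}}\Bigl((hh_0^{-1})\cdot\int_M^{\eta_h}\omega - \int_M^{\eta_h}\omega\Bigr),
\]
which belongs by construction to the subspace of $\ell^\infty(G)$ generated by elements $f - g\cdot f$, hence vanishes in $\ell^\infty(G)_G$. The argument is a straightforward telescoping trick, and the only step meriting some care is the initial finite decomposition $\varphi = \sum_{h\in F} h\cdot\psi_h$, which rests on proper discontinuity of the action together with compactness of $\operatorname{supp}(\varphi)$; I do not anticipate any substantive obstacle.
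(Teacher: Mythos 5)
Your proof is correct and takes essentially the same route as the paper: both fix a fundamental domain, use proper discontinuity and compact support to cut $\varphi$ into finitely many pieces supported on translates of $D$, translate them back via \Cref{rm:L1action}, and invoke the orbit-sum hypothesis (which collapses to $\sum_h\psi_h=0$ on $D$) to exhibit $\int_M^\varphi\omega$ as a sum of coboundaries. The only cosmetic difference is that the paper subtracts the coboundary terms from $\varphi$ and checks the remainder vanishes a.e., whereas you write $\varphi$ directly as a telescoping sum; the substance is identical.
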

	\begin{proof}
		Fix a bounded fundamental domain $D \subseteq M$, \ie, a bounded Borel subset of $M$ such that its $G$-translates give a partition of $M$.
		Note that, since $D$ is bounded and the action $G \curvearrowleft M$ is properly discontinuous, the partition is locally finite.
		Since $\varphi$ has compact support, it (essentially) vanishes on all but a finite number of distinct translates $g_1D, \dots, g_kD$.
		For every $i \in \{1,\dots,k\}$ consider the following function:
		\[\varphi_i(x) = \begin{cases}
			\varphi(x) & x \in g_iD\\
			0 & x \not\in g_iD,
		\end{cases}\]
		which is in $L^1(M)$ and has compact support.
		Consider now
		\[\psi = \varphi - \sum_{i=1}^k (\varphi_i - g_i^{-1}\cdot\varphi_i).\]
		It follows from \Cref{rm:L1action} and linearity of integration that, for every $\omega\in\bddforms^n(M)$, $\int_M^\varphi\omega$ and $\int_M^\psi\omega$ give the same element in $\ell^\infty(G)_G$.
		Moreover, by construction, $\psi$ is supported in $D$ and has sum $0$ along almost every $G$-orbit, but this implies that $\psi$ vanishes almost everywhere and the conclusion follows.
	\end{proof}
	
	\begin{definition}
		Let $\varphi \in L^1(M)$ be compactly supported, satisfying
		\[
		\sum_{g \in G} \varphi(gx) = 1
		\]
		for almost every $x \in M$.
		We define 
		\[ \int_M: \bddforms^n(M) \to \ell^\infty(G)_G \]
		as the composition of the map $\int_M^\varphi$ with the projection $\ell^\infty(G) \to \ell^\infty(G)_G$.
		By \Cref{lemma:phiindep}, the map $\int_M$ does not depend on $\varphi$.
	\end{definition}
	
	If $M$ has boundary, by restriction we have an action $G\curvearrowright\partial M$.
	We recall the following Stokes-type result from \cite{KKT22}, where it is stated and proved for integration on $G$-translates of a specific fundamental domain obtained from a triangulation of $G\backslash M$, which is just (thanks to \Cref{lemma:phiindep}) a specific implementation of the map $\int_M$ defined above.
	\begin{proposition}\label{prop:stokes}
		For every $\omega \in \bddforms^{n-1}(M)$, it holds
		\[ \int_M d\omega = \int_{\partial M} i^*\omega \in \ell^\infty(G)_G,\]
		where $i:\partial M \to M$ is the inclusion.
	\end{proposition}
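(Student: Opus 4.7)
The plan is to choose a smooth and compactly supported weight function $\varphi$ with $\sum_{g \in G} \varphi(gx) = 1$ for every $x \in M$ (easily obtained by normalizing a smooth bump function whose $G$-translates cover $M$), and to use it both to compute $\int_M$ and, via its restriction $\psi := i^*\varphi$, to compute $\int_{\partial M}$. Smoothness makes the restriction meaningful, and $\sum_g \psi(gy) = 1$ for every $y \in \partial M$ (since $G$ preserves $\partial M$), so $\psi$ is admissible by \Cref{lemma:phiindep}.

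For every $g \in G$, the form $\varphi \cdot g^*\omega$ is smooth and compactly supported on $M$, so classical Stokes together with the Leibniz rule $\varphi \cdot d(g^*\omega) = d(\varphi \cdot g^*\omega) - d\varphi \wedge g^*\omega$ and the identity $g^*d\omega = d(g^*\omega)$ gives
\[
\left(\int_M^\varphi d\omega\right)(g) = \left(\int_{\partial M}^\psi i^*\omega\right)(g) - \int_M d\varphi \wedge g^*\omega.
\]
The function $F \colon g \mapsto \int_M d\varphi \wedge g^*\omega$ belongs to $\ell^\infty(G)$ because $d\varphi$ is compactly supported and $\omega$ is bounded with respect to the $G$-invariant metric. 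The theorem thus reduces to showing that $F$ is trivial in $\ell^\infty(G)_G$.

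The key input is that differentiating $\sum_g g^*\varphi = 1$ (a locally finite sum of smooth functions) yields $\sum_g g^*(d\varphi) = 0$ pointwise; so $d\varphi$ is a compactly supported smooth $1$-form with vanishing $G$-orbit sums. I would then prove a form-valued analog of \Cref{lemma:phiindep}: if $\eta$ is a compactly supported smooth $1$-form on $M$ with $\sum_g g^*\eta = 0$, then $g \mapsto \int_M \eta \wedge g^*\omega$ is trivial in $\ell^\infty(G)_G$. The proof copies that of \Cref{lemma:phiindep} verbatim --- fix a bounded fundamental domain $D$, decompose $\eta = \sum_{i=1}^k \eta_i$ with $\eta_i$ supported on a translate $g_i D$, and verify (via the computation in \Cref{rm:L1action}) that each correction $\eta_i - g_i^{-1}\cdot\eta_i$ contributes to $\ell^\infty(G)$ an element of the form $f - g_i^{-1}\cdot f$, trivial in the coinvariants, while the residual is supported in $D$ with vanishing orbit sums, hence zero. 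No step is particularly delicate, and the only substantive check beyond \Cref{lemma:phiindep} is the uniform boundedness of $g \mapsto \int_M \eta_i \wedge g^*\omega$, which follows pointwise from $|\eta_i \wedge g^*\omega| \le \norm{\eta_i}_\infty \norm{\omega}_\infty$ (using $G$-invariance of the metric) and from the compact support of $\eta_i$.
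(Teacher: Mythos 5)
Your argument is correct, but it takes a genuinely different route from the paper, which does not actually reprove the statement: it imports the fundamental-domain version from \cite{KKT22} (classical Stokes applied on each $G$-translate of a fundamental domain coming from a triangulation of $G\backslash M$) and observes that, thanks to \Cref{lemma:phiindep}, that implementation is one particular realization of the map $\int_M$, so the identity in the coinvariants transfers. Your proof is instead self-contained: by taking the weight $\varphi$ smooth you may apply the ordinary Stokes theorem to the globally defined, compactly supported form $\varphi\cdot g^*\omega$, and the entire difficulty is concentrated in the error term $g\mapsto\int_M d\varphi\wedge g^*\omega$, which you kill in $\ell^\infty(G)_G$ using $\sum_{g}g^*(d\varphi)=0$ (obtained by differentiating the locally finite sum $\sum_g g^*\varphi=1$) together with a form-valued copy of \Cref{lemma:phiindep}. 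This trades the combinatorics of the boundary faces of a fundamental domain --- and the need for Stokes on domains with corners --- for a soft orbit-sum argument; the transported lemma is indeed a verbatim adaptation, the only cosmetic point being that the pieces of $d\varphi$ cut out by the translates $g_iD$ are merely bounded measurable rather than smooth, which is harmless since only integrability against $g^*\omega$ is used, exactly as in the paper's $L^1$ setting. Both your change-of-variables step (the analogue of \Cref{rm:L1action} for forms) and the paper's own \Cref{rm:L1action} implicitly require $G$ to act by orientation-preserving diffeomorphisms, so this is a shared convention rather than a gap in your argument.
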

	\Cref{prop:stokes} readily implies that $\int_M$ induces a map on bounded relative de Rham cohomology:
	\begin{equation}\label{eq:intmap}
		\int_M: \bdddr^n(M,\partial M) \to \ell^\infty(G)_G.
	\end{equation}
	
	\section{The integration map is an isomorphism}
	In this section we prove the following.
	\begin{theorem}\label{thm:isomorphism}
		Let $M$ be a smooth oriented connected manifold of dimension $M$, possibly with boundary, endowed with a free, properly discontinuous and cocompact action by diffeomorphisms of a group $G$.
		Then the integration map \eqref{eq:intmap} is an isomorphism of real vector spaces.
	\end{theorem}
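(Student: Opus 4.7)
My plan is to exhibit an explicit two-sided inverse to the integration map, at the level of forms. Fix a $G$-invariant Riemannian metric on $M$, a bounded fundamental domain $D \subseteq M$, and a smooth $n$-form $\omega_0$ compactly supported in the interior of $D$ (and in the interior of $M$), normalized so that $\int_M \omega_0 = 1$. For every $f \in \ell^\infty(G)$ define
\[
    \omega_f \;:=\; \sum_{g \in G} f(g)\,(g^{-1})^* \omega_0.
\]
Proper discontinuity together with cocompactness makes this sum pointwise locally finite with a uniformly bounded number of nonzero summands, and since $G$ acts by isometries we have $\omega_f \in \bddforms^n(M,\partial M)$ with $\norm{\omega_f}_\infty \le C\,\norm{f}_\infty$. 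A direct computation, using that the translate $(g^{-1}h)(D)$ meets $\mathrm{supp}(\omega_0)$ in positive measure only when $g = h$, gives $\int_M^{\chi_D}\omega_f = f$ as an element of $\ell^\infty(G)$; consequently the assignment $f \mapsto [\omega_f]$ is a section of the composition $\ell^\infty(G) \to \bdddr^n(M,\partial M) \to \ell^\infty(G)_G$.

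The proof then reduces to the following \emph{vanishing lemma}: if $\omega \in \bddforms^n(M)$ satisfies $\int_{gD}\omega = 0$ for every $g \in G$, then $\omega = d\eta$ for some $\eta \in \bddforms^{n-1}(M,\partial M)$. Granted the lemma, two short arguments finish the proof. First, for any $f \in \ell^\infty(G)$ and $g \in G$ we rewrite $\omega_{f - g\cdot f} = \sum_k f(k)(k^{-1})^*(\omega_0 - g^*\omega_0)$; the compactly supported form $\omega_0 - g^*\omega_0$ has vanishing integral on $M$ in the orientation-preserving case, hence a compactly supported primitive in $\mathrm{int}(M)$, whose $G$-equivariant sum provides a bounded primitive of $\omega_{f - g\cdot f}$ vanishing on $\partial M$ (an analogous construction handles the general case). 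This shows $f \mapsto [\omega_f]$ descends to a section $\ell^\infty(G)_G \to \bdddr^n(M,\partial M)$ of the integration map. Second, for any $\omega$ with $\int_M[\omega] = 0$ in $\ell^\infty(G)_G$, subtracting the bounded-exact form $\omega_{\int_M^{\chi_D}\omega}$ produces a form with vanishing integral on every translate of $D$, to which the vanishing lemma directly applies, yielding injectivity and completing the proof.

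The main obstacle is the vanishing lemma. I would approach it using a $G$-invariant smooth triangulation $T$ of $M$ chosen so that $D$ is a union of $n$-simplices of $T$. On each $n$-simplex $\sigma$ the classical de Rham theorem on compact manifolds with boundary provides a primitive $\eta_\sigma \in \Omega^{n-1}(\sigma)$ of $\omega|_\sigma$ vanishing on $\partial\sigma$, but a pointwise gluing produces only a continuous global form. To obtain a smooth bounded primitive, I plan to work on a thickened open cover by uniform neighborhoods of the top cells, choose smooth local primitives on each open set, and resolve the resulting \v{C}ech cocycle of differences via the bounded version of the \v{C}ech--de Rham double complex. The hypothesis that all integrals over translates of $D$ vanish trivializes the relevant obstruction classes in a uniform, $G$-compatible manner, while the bounded geometry of the cover (a consequence of cocompactness) ensures the resolution produces a primitive lying in $\bddforms^{n-1}(M,\partial M)$.
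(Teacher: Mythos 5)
Your reduction is correct and is essentially a cleaner packaging of what the paper does in \Cref{lemma:zerof}: the explicit section $f\mapsto[\omega_f]$, the identity $\omega_{f-g\cdot f}=\sum_k f(k)\,(k^{-1})^*(\omega_0-g^*\omega_0)$, and the compactly supported primitive of the ``dipole'' $\omega_0-g^*\omega_0$ (which plays exactly the role of the paper's tube forms $\theta_j^*\nu$) do reduce the theorem to your vanishing lemma. Two caveats: connectedness of $M$ is what makes $\int_M(\omega_0-g^*\omega_0)=0$ imply the existence of a compactly supported primitive in the interior, so it should be invoked explicitly; and for orientation-reversing $g$ that integral equals $2$, not $0$, so ``an analogous construction handles the general case'' is not a throwaway remark.

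The genuine gap is the vanishing lemma itself, which is the heart of the proof and which you only sketch. The sentence claiming that the hypothesis ``trivializes the relevant obstruction classes in a uniform, $G$-compatible manner'' asserts precisely what must be proved. Concretely: after choosing bounded local primitives on a uniform cover, the obstruction to gluing them is a top-degree \v{C}ech-type cocycle on the nerve with $\ell^\infty$ coefficients, and the hypothesis $\int_{gD}\omega=0$ only says that its total mass over each translate of $D$ vanishes; to conclude that it is a \emph{bounded} coboundary you must actually redistribute that mass among the boundedly many charts meeting each translate, and this redistribution is a real step, not a formality. (Moreover, running the full \v{C}ech--de Rham double complex boundedly would require quantitative Poincar\'e lemmas on all multi-intersections in all degrees, a stronger input than \Cref{lemma:poincare}.) The paper carries out exactly this redistribution explicitly: \Cref{lemma:zeroint} moves mass between the charts $g\cdot D_i$ sharing the same $g$ via exact forms with controlled primitives, after which \Cref{lemma:poincare} and \Cref{lemma:poincareb} apply chart by chart and the local primitives sum to an element of $\bddforms^{n-1}(M,\partial M)$ by uniform local finiteness. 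Your route can very likely be completed, but as written the decisive step is missing.
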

	Before going through the proof, we need to establish suitable versions of the Poincaré lemma, in which we control the norm of the primitive of a given differential form.
	The construction of the primitive is the same as in classical proofs of the Poincaré lemma (\eg, \cite[Theorem 3.2]{GH2004}); we just remark how we can bound the norm of the resulting primitive.
	This is by no means the first time that a ``bounded'' Poincaré lemma is considered; the numerous versions existing in the literature all consider slightly different settings.
	
	\subsection{Poincaré lemmas with control on the norm}
	We fix the following notation:
	\begin{itemize}
		\item $Q = (0,1)^n \subset \R^n$;
		\item $Q' = (0,1)^{n-1} \times [0,1) \subset \R^n$.
	\end{itemize}
	
	\begin{lemma}\label{lemma:poincare}
		Let $\omega \in \Omega^n(Q)$ be a smooth differential form with compact support and $\int_Q\omega = 0$.
		Then, there exists $\eta \in \Omega^{n-1}(Q)$ with compact support such that:
		\begin{itemize}
			\item $d\eta = \omega$;
			\item $\norm{\eta}_\infty \le K_n \cdot \norm{\omega}_\infty$, where $K_n \in \R$ does not depend on $\omega$.
		\end{itemize}
	\end{lemma}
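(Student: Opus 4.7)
The plan is to proceed by induction on $n$, mimicking the classical slicing proof of the Poincar\'e lemma (as in \cite[Theorem 3.2]{GH2004}) while carefully propagating a uniform bound on the sup norm at each step. The base case $n=1$ is immediate: given a compactly supported $\omega = f(x)\,dx \in \Omega^1((0,1))$ with $\int_0^1 f = 0$, the antiderivative $\eta(x) = \int_0^x f(t)\,dt$ is itself compactly supported in $(0,1)$ and satisfies $|\eta| \le \|f\|_\infty = \|\omega\|_\infty$, so I would take $K_1 = 1$.

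For the inductive step, I would fix once and for all an auxiliary bump function $\rho \in C^\infty_c((0,1))$ with $\int_0^1 \rho = 1$; its sup norm $\|\rho\|_\infty$ is independent of $\omega$ and will be absorbed into the constant $K_n$. Writing $x = (x',x_n) \in (0,1)^{n-1}\times(0,1)$ and $\omega = f\,dx_1\wedge\cdots\wedge dx_n$, define the slicewise average $\tilde f(x') = \int_0^1 f(x',t)\,dt$, which is smooth and compactly supported on $(0,1)^{n-1}$ with $\|\tilde f\|_\infty \le \|\omega\|_\infty$ and $\int \tilde f = \int_Q \omega = 0$ by Fubini. Decompose $f = \rho(x_n)\tilde f(x') + r(x)$, so that by construction $\int_0^1 r(x',t)\,dt = 0$ for every $x'$, and $\|r\|_\infty \le (1+\|\rho\|_\infty)\|\omega\|_\infty$.

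The remainder piece $r\,dx_1\wedge\cdots\wedge dx_n$ admits the explicit primitive $\eta_1 := (-1)^{n-1}h\,dx_1\wedge\cdots\wedge dx_{n-1}$, where $h(x',x_n) := \int_0^{x_n} r(x',t)\,dt$; the mean-zero property of $r$ along each $x_n$-fiber, together with the compact support of $r$, ensures that $h$ is compactly supported in $Q$, and one has $\|\eta_1\|_\infty \le \|r\|_\infty$. For the main piece $\rho(x_n)\tilde f(x')\,dx_1\wedge\cdots\wedge dx_n$, I would apply the inductive hypothesis to the compactly supported integral-zero top form $\tilde f\,dx_1\wedge\cdots\wedge dx_{n-1}$ on the $(n-1)$-cube, obtaining a compactly supported $\tilde\eta \in \Omega^{n-2}((0,1)^{n-1})$ with $d\tilde\eta = \tilde f\,dx_1\wedge\cdots\wedge dx_{n-1}$ and $\|\tilde\eta\|_\infty \le K_{n-1}\|\omega\|_\infty$. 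Pulling $\tilde\eta$ back to $Q$ (as a form depending only on $x'$) and setting $\eta_2 := \tilde\eta\wedge\rho(x_n)\,dx_n$ yields a compactly supported $(n-1)$-form with $d\eta_2 = \rho(x_n)\tilde f(x')\,dx_1\wedge\cdots\wedge dx_n$, since $d(\rho(x_n)\,dx_n)=0$. Then $\eta := \eta_1 + \eta_2$ is the desired primitive, with $K_n := (1+\|\rho\|_\infty) + K_{n-1}\|\rho\|_\infty$.

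None of the individual steps involves genuine analytic difficulty beyond the classical Poincar\'e lemma; the two points that really require attention are the compact support of $h$ (where the fiberwise identity $\int_0^1 r(x',t)\,dt = 0$ is essential to guarantee vanishing near $x_n = 1$, not only near $x_n = 0$) and the bookkeeping of the constants $K_n$, which grow but remain finite at each finite $n$. I expect a close variant of this argument, with a modified base case on $[0,1)$ that keeps track of the pullback to $\{x_n=0\}$, to yield the companion half-open statement on $Q'$ needed to handle the boundary.
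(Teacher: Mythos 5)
Your proof is correct and is essentially the paper's argument: both are the classical slicing construction (antiderivative in one coordinate plus a bump function $\rho$ redistributing the fiberwise average), with the sup norm controlled at each step by $\|f\|_\infty$ and $\|\rho\|_\infty$. The only difference is organizational --- you recurse on the dimension by peeling off the last coordinate, while the paper iterates a vanishing condition over the coordinate index $k$ within the fixed cube $Q$; unrolling your induction recovers the paper's telescoping sum, and your sketch of the $Q'$ variant matches \Cref{lemma:poincareb}.
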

	\begin{proof}
		Write $\omega = f dx_1 \wedge \dots \wedge dx_n$ and suppose that, for a certain $k \in \{1, \dots, n\}$, the following condition holds:
		\begin{align}\label{eq:fcond}
			\int_{x_n}\dots\int_{x_k} f(x_1,\dots,x_n)\ dx_k\cdots dx_n = 0 && 
			\forall (x_1, \dots, x_{k-1}) \in (0,1)^{k-1}.
		\end{align}
		By hypothesis, this is certainly true for $k = 1$ (while for $k = n$ the condition implies that the form vanishes).
		In the following, we show how to find $\eta_k \in \Omega^{n-1}(Q)$ with compact support such that $\omega - d\eta_k$ satisfies the condition with $k$ replaced by $k+1$.
		Repeating this up to $k = n$, and defining $\eta$ as the sum of the $\eta_k$'s, we get $\omega - d\eta = 0$, with the required control of the norm (as long as we control the norms of the $\eta_k$'s).
		Define $g:(0,1)^k\to \R$ as
		\[ g(x_1, \dots, x_k) = \int_{x_n}\dots\int_{x_{k+1}} f(x_1,\dots,x_n)\ dx_{k+1}\cdots dx_n.\]
		Notice that $g$ has compact support and $\norm{g}_\infty \le \norm{f}_\infty$.
		Now, define $h:(0,1)^k\to \R$ as
		\[ h(x_1, \dots, x_k) = \int_0^{x_k} g(x_1, \dots, x_{k-1},s)\ ds. \]
		Condition \eqref{eq:fcond} implies that $h$ has compact support, and again $\norm{h}_\infty \le \norm{f}_\infty$.
		Let $\rho:(0,1)^{n-k}\to \R$ be a smooth function with compact support and integral $1$. Finally, define
		\[ \nu_k = (-1)^{k-1} h(x_1,\dots,x_k)\rho(x_{k+1},\dots,x_n)\  dx_1\wedge\dots\wedge\widehat{dx_k}\wedge\dots\wedge dx_n.\]
		The form $\nu_k$ has compact support, satisfies $\norm{\nu_k}_\infty \le \norm{h}_\infty \cdot \norm{\rho}_\infty$, and a straightforward check proves that $\omega-d\nu_k$ satisfies \eqref{eq:fcond} with $k$ replaced by $k+1$.
	\end{proof}
	
	\begin{lemma}\label{lemma:poincareb}
		Let $\omega \in \Omega^n(Q')$ be a smooth differential form with compact support and $\int_{Q'}\omega = 0$.
		Then, there exists $\eta \in \Omega^{n-1}(Q')$ with compact support such that:
		\begin{itemize}
			\item $d\eta = \omega$;
			\item The pull-back of $\eta$ to the boundary $\partial Q' = (0,1)^{n-1}\times\{0\}$ vanishes;
			\item $\norm{\eta}_\infty \le K_n \cdot \norm{\omega}_\infty$, where $K_n \in \R$ does not depend on $\omega$.
		\end{itemize}
	\end{lemma}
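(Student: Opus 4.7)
The plan is to rerun the inductive construction from the proof of \Cref{lemma:poincare} essentially verbatim, and then verify that both the boundary-vanishing condition and the compact-support condition come out for free.

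Writing $\omega = f\,dx_1\wedge\cdots\wedge dx_n$, the hypothesis on $\omega$ is that $f$ has support bounded away from the lateral faces $\partial(0,1)^{n-1}\times[0,1)$ and from the top $(0,1)^{n-1}\times\{1\}$, but possibly touching the bottom face $\{x_n=0\}$. I would produce the same auxiliary forms $\nu_k$ as before. For $k<n$ all the variables $x_1,\dots,x_k$ range in $(0,1)$, so the whole analysis of the auxiliary functions $g$ and $h$ (compact support, the norm estimates, and the proof that $\omega-d\nu_k$ satisfies condition \eqref{eq:fcond} with $k$ replaced by $k+1$) carries over unchanged; moreover these $\nu_k$ contain the factor $dx_n$, so their pull-back to $\partial Q'$ is automatically zero. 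The step $k=n$ is the one that needs a second look: the function $h_n$, defined on $(0,1)^{n-1}\times[0,1)$ by $h_n(x_1,\dots,x_n)=\int_0^{x_n} f(x_1,\dots,x_{n-1},s)\,ds$, is compactly supported in $Q'$ thanks precisely to the vanishing condition \eqref{eq:fcond} at step $n$ that has been arranged during the previous steps.

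The key observation is that the pull-back of $\nu_n=(-1)^{n-1}h_n\,dx_1\wedge\cdots\wedge dx_{n-1}$ to the boundary $\{x_n=0\}$ is already zero because the formula defines $h_n$ as an integral starting at $0$, so $h_n(x_1,\dots,x_{n-1},0)=0$. Setting $\eta=\sum_{k=1}^n\nu_k$ therefore gives the required primitive, with the bound $\norm{\eta}_\infty\le K_n\cdot\norm{\omega}_\infty$ inherited verbatim from the previous construction.

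I do not foresee a real obstacle. The boundary-vanishing property is built into the choice of integrating from $0$ rather than from some other basepoint; in \Cref{lemma:poincare} that choice was natural because of compact support, whereas here it plays the additional role of making $\nu_n$ pull back to zero on the bottom face. The only slight care needed is in bookkeeping the supports when one of the variables is allowed to reach the boundary of its interval, but this is completely routine.
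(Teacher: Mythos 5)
Your proposal is correct and follows essentially the same route as the paper: rerun the construction of \Cref{lemma:poincare} and observe that the only form not automatically killed by the pull-back to $\{x_n=0\}$ is $\nu_n$, whose coefficient $h_n$ vanishes there because it is defined as an integral from $0$. Your explicit remark that the $\nu_k$ with $k<n$ contain the factor $dx_n$ is a detail the paper leaves implicit, but the argument is the same.
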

	\begin{proof}
		Proceed as in \Cref{lemma:poincare}.
		We just have to check that, in the last step with $k = n$, the pull-back of $\eta_n = (-1)^{n-1} h(x_1,\dots,x_n)\ dx_1\wedge\dots\wedge dx_{n-1}$ vanishes.
		But this is true because $h(x_1,\dots,x_n) = 0$ whenever $x_n = 0$.
	\end{proof}
	
	\subsection{Injectivity of the integration map}
	Recall that we already know from \cite{KKT22} that the integration map is surjective; hence, we are left to show that it is also injective.
	
	We keep the notation $Q = (0,1)^n$ and $Q' = (0,1)^{n-1}\times [0,1)$.
	We fix a finite collection of open subsets $A_1, \dots, A_k \subset G\backslash M$, each endowed with an orientation-preserving diffeomorphism $A_i \cong Q$ or $A_i \cong Q'$, such that:
	\begin{itemize}
		\item Each $A_i$ trivializes the covering map $\pi: M \to G\backslash M$;
		\item $A_1 \cup \dots \cup A_k = G\backslash M$;
		\item The subsets $B_i \subset A_i$ corresponding, via the diffeomorphism $A_i \cong Q$ or $A_i \cong Q'$, to $(1/3,2/3)^n \subset \R^n$, are pairwise disjoint.
	\end{itemize}
	Then we fix smooth maps $\lambda_i : G\backslash M \to \R$ compactly supported in $A_i$, with $\lambda_1 + \dots + \lambda_k \equiv 1$ on $G\backslash M$.
	Note that $\lambda_i$ is constantly equal to $1$ in $B_i$.
	
	Now we lift everything to $M$ via the projection $\pi:M \to G\backslash M$.
	For each $i \in \{1,\dots,k\}$, we fix a lift $C_i$ of $A_i$ in $M$, \ie, a connected component of its preimage $\pi^{-1}(A_i) \subset M$.
	We denote by $D_i = C_i \cap \pi^{-1}(B_i)$ the lift of $B_i$ sitting inside $C_i$.
	
	Define $\varphi_i:G\backslash M \to \R$ to be identically $0$ outside $C_i$ and equal to the composition $\lambda\circ\pi$ on $C_i$.
	It is smooth and it is constantly equal to $1$ in $D_i$.
	Finally, set $\varphi = \varphi_1 + \dots + \varphi_k$.
	Henceforth, we use this specific $\varphi$ to define the integration map $\int_M^\varphi:\bddforms^n(M)\to\ell^\infty(G)$.
	
	\begin{lemma}\label{lemma:zerof}
		Let $\omega \in \bddforms^n(M)$ and suppose that $\int_M\omega = 0 \in \ell^\infty(G)_G$.
		Then there exists $\eta \in \bddforms^{n-1}(M,\partial M)$ such that $\int_M^\varphi(\omega-d\eta) = 0 \in \ell^\infty(G)$.
	\end{lemma}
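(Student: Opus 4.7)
Set $F := \int_M^\varphi \omega \in \ell^\infty(G)$. The hypothesis $F = 0 \in \ell^\infty(G)_G$ yields a finite decomposition $F = \sum_{j=1}^N (f_j - g_j\cdot f_j)$ with $f_j \in \ell^\infty(G)$ and $g_j \in G$. By linearity, it suffices to produce, for every pair $(f, g_0) \in \ell^\infty(G) \times G$, a form $\eta_{f, g_0} \in \bddforms^{n-1}(M, \partial M)$ with $\int_M^\varphi d\eta_{f, g_0} = f - g_0\cdot f$; then $\eta := \sum_j \eta_{f_j, g_j}$ works.

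The technical heart is a ``Kronecker bump'' identity. Fix $p \in D_1$ and a smooth $\beta \in \Omega^n_c(D_1)$ with $\int_M \beta = 1$; set $\beta_h := h_*\beta$ for $h \in G$. The claim is
\[
  \int_M^\varphi \beta_h = \delta_h \in \ell^\infty(G),
\]
with $\delta_h$ the Kronecker indicator of $h$. A change of variable (substituting $y = h\xi$ and setting $k := (g')^{-1}h$) reduces this to showing that $\varphi \equiv 1$ on $D_1$ (the diagonal case $k = e$) and $\varphi \equiv 0$ on $kD_1$ for every $k \neq e$. The first follows from $\lambda_1 \equiv 1$ on $B_1$ and nonnegativity of the $\lambda_i$'s. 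For the second: if $j \neq 1$, then $\varphi_j(kz) = \lambda_j(\pi(z))\chi_{C_j}(kz) = 0$ because $\pi(z) \in B_1$; if $j = 1$, the stabilizer of $C_1$ is trivial (since $\pi|_{C_1}$ is a diffeomorphism onto $A_1$ and $G$ acts freely), hence $kC_1 \cap C_1 = \emptyset$, and so $\varphi_1 = 0$ on $kD_1$.

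With the identity in hand, the construction of $\eta_{f, g_0}$ is a $G$-equivariant assembly. Since $M$ is connected and both $p$ and $g_0^{-1}p$ lie in $\operatorname{int}(M)$, choose a smooth path in $\operatorname{int}(M)$ joining them; a bounded Poincar\'e-lemma construction in a tubular chart along this path (in the spirit of \Cref{lemma:poincare}) produces $\alpha_0 \in \Omega^{n-1}_c(\operatorname{int}(M))$ with $d\alpha_0 = \beta - \beta_{g_0^{-1}}$. Let $\alpha_h := h_*\alpha_0$: each has the same sup norm (the $G$-action is isometric for any invariant metric), and by proper discontinuity and cocompactness only uniformly boundedly many $\alpha_h$ are nonzero at any point of $M$. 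Therefore
\[
  \eta_{f, g_0} := \sum_{h \in G} f(h)\, \alpha_h
\]
is a well-defined element of $\bddforms^{n-1}(M, \partial M)$, with exterior derivative $\sum_h f(h)(\beta_h - \beta_{hg_0^{-1}})$; applying the Kronecker identity gives $\int_M^\varphi d\eta_{f, g_0}(g') = f(g') - f(g' g_0) = (f - g_0\cdot f)(g')$, as required.

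The main obstacle is the Kronecker identity: it must hold \emph{exactly}, not approximately. This is precisely why the setup of $B_i \subset A_i$, $D_i \subset C_i$, the triviality of the stabilizer of $C_i$, and the pointwise behaviour of the $\lambda_i$'s were all tailored the way they are. Once that identity is secured, the remainder is a routine bounded Poincar\'e-type construction together with an equivariant summation controlled by cocompactness.
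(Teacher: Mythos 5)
Your proof is correct and follows essentially the same route as the paper: the same decomposition of $\int_M^\varphi\omega$ into terms $f_j - g_j\cdot f_j$, the same exact ``Kronecker'' computation relying on $\varphi\equiv 1$ on $D_1$ and $\varphi\equiv 0$ on $kD_1$ for $k\neq e$, and the same construction of a compactly supported primitive of a $+1/-1$ pair of bumps transported along a path from $g_0^{-1}D_1$ to $D_1$ (the paper packages this as an embedding $\theta_j:Q\to M$ and a form $\rho=d\nu$ from \Cref{lemma:poincare}), followed by the same $G$-equivariant, locally finite weighted sum. The only differences are presentational.
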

	\begin{proof}
		We have by assumption that
		\[ \int_M^\varphi\omega = \sum_{j=1}^r(f_j - g_j\cdot f_j) \in \ell^\infty(G) \]
		for some $f_1, \dots, f_r \in \ell^\infty(G)$ and $g_1,\dots,g_r \in G$.
		More explicitly, this yields
		\[ \int_M \varphi \cdot h^*\omega = \sum_{j=1}^r(f_j(h)-f_j(h g_j)) \in \R \]
		for every $h \in G$.
		For $j \in \{1,\dots,r\}$, fix a smooth orientation-preserving embedding $\theta_j:Q \to M$ that ``connects'' $g_j^{-1}D_1$ to $D_1$ in the following sense:
		\begin{itemize}
			\item $\theta_j(x_1,\dots,x_n) \in g_j^{-1}D_1$ for $x_n < 1/3$ (the ``lower part'' of $Q$);
			\item $\theta_j(x_1,\dots,x_n) \in D_1$ for $x_n > 2/3$ (the ``upper part'' of $Q$).
		\end{itemize}
		Fix $\rho \in \Omega^n(Q)$ with compact support, with integral $1$ in the upper part of $Q$, integral $-1$ in the lower part of $Q$, and vanishing in the ``central part'' ($1/3 \le x_n \le 2/3$).
		By \Cref{lemma:poincare}, $\rho=d\nu$ for a certain $\nu \in \Omega^{n-1}(Q)$ with compact support.
		We think of $\theta_j^*\rho$ and $\theta_j^*\nu$ as forms on $M$, by extending them to $0$ away from $\theta_j(Q)$.
		The key property of $\theta_j^*\rho$ is that
		\[ \int_M \varphi \cdot h^*(g^{-1})^*\theta_j^*\rho = \begin{cases}
			1 & h = g\\
			-1 & h = gg_j^{-1}\\
			0 & \text{else}
		\end{cases}\]
		for every $h,g \in G$.
		We define
		\[ \eta = \sum_{j=1}^r\sum_{g \in G} f_j(g)\ (g^{-1})^*\theta_j^*\nu. \]
		Since $\theta_j^*\nu$ has compact support and $G\curvearrowright M$ is properly discontinuous, on every compact set only a finite number of terms do not vanish, hence the sum is well defined.
		Moreover, $\eta \in \bddforms^{n-1}(M,\partial M)$.
		For every $h \in G$, we have
		\begin{align*}
			\int_M \varphi \cdot h^*d\eta &= \sum_{j=1}^r\sum_{g \in G} f_j(g) \int_M \varphi \cdot h^*(g^{-1})^*\theta_j^*\rho\\
			&= \sum_{j=1}^r (f_j(h) - f_j(hg_j)) = \int_M \varphi\cdot h^*\omega,
		\end{align*}
		that is, $\int_M^\varphi(\omega-d\eta) = 0$.
	\end{proof}
	
	\begin{lemma}\label{lemma:zeroint}
		Let $\omega \in \bddforms^n(M)$ and suppose that $\int_M\omega = 0 \in \ell^\infty(G)_G$.
		Then there exists $\eta \in \bddforms^{n-1}(M,\partial M)$ such that
		\[
			\int_M \varphi_i \cdot g^*(\omega-d\eta) = 0
		\]
		for every $g \in G$ and $i \in \{1,\dots,k\}$.
	\end{lemma}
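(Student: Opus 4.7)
The plan is to apply \Cref{lemma:zerof} first to extract some $\eta_0 \in \bddforms^{n-1}(M,\partial M)$ with $\int_M^\varphi(\omega - d\eta_0) = 0 \in \ell^\infty(G)$, and then produce a further correction $\eta_1$ so that $\eta := \eta_0 + \eta_1$ meets the conclusion. Writing $a_j(h) := \int_M\varphi_j\cdot h^*(\omega - d\eta_0)$, the reduction gives $\sum_{j=1}^k a_j(h) = 0$ for every $h \in G$, so the task becomes to find $\eta_1 \in \bddforms^{n-1}(M,\partial M)$ realizing $\int_M \varphi_j \cdot h^*d\eta_1 = a_j(h)$ for every $(j,h)$, i.e.\ to reshuffle the remaining mass between the $\varphi_j$.

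For each $i \in \{2,\dots,k\}$ I would build an $(n{-}1)$-form whose differential transfers one unit of mass from $D_i$ to $D_1$. Since $M$, and hence $\operatorname{int}(M)$, is connected, I can pick a smooth embedding $\tilde\theta_i : Q \hookrightarrow \operatorname{int}(M)$ whose lower face ($x_n < 1/3$) sits inside $D_i$ and whose upper face ($x_n > 2/3$) sits inside $D_1$ \emph{itself}; then fix once and for all a form $\rho \in \Omega^n(Q)$ supported in those two faces with integrals $+1$ and $-1$ respectively, and, by \Cref{lemma:poincare}, a compactly supported primitive $\nu$ of $\rho$ with $\norm{\nu}_\infty$ bounded by a constant. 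Using that $\varphi_j \equiv 1$ on $D_j$ and $\varphi_j \equiv 0$ on every $gD_\ell$ with $(g,\ell) \ne (e,j)$, a direct change-of-variables computation will give
\[
\int_M \varphi_j \cdot h^*\bigl((g^{-1})^*\tilde\theta_i^*\rho\bigr) \;=\; \delta_{(j,g)=(1,h)} \;-\; \delta_{(j,g)=(i,h)}.
\]

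I would then take
\[
\eta_1 \;=\; -\sum_{i=2}^k \sum_{g\in G} a_i(g)\,(g^{-1})^*\tilde\theta_i^*\nu.
\]
Proper discontinuity together with compactness of $\tilde\theta_i(Q)$ makes the sum locally finite; $G$-invariance of the metric gives each summand the same sup-norm as $\tilde\theta_i^*\nu$; cocompactness bounds uniformly the number of summands nonzero at each point; and $|a_i(g)|\le \norm{\omega}_\infty\cdot\norm{\varphi_i}_{L^1}$ controls the coefficients. Together these put $\eta_1$ in $\bddforms^{n-1}(M)$; and since $\tilde\theta_i(Q) \subset \operatorname{int}(M)$, the form vanishes near $\partial M$, hence $\eta_1 \in \bddforms^{n-1}(M,\partial M)$. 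Combining the displayed identity with $\sum_j a_j(h) = 0$ then yields $\int_M \varphi_j \cdot h^*d\eta_1 = a_j(h)$ for every $(j,h)$.

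The main obstacle, and the step that really uses the hypothesis that $M$ (not just $G\backslash M$) is connected, is arranging that the upper face of $\tilde\theta_i$ land in $D_1$ itself rather than merely in some $G$-translate $g_iD_1$. Were $g_i \ne e$, the $j=1$ term in the displayed identity would shift to $\delta_{g=hg_i^{-1}}$, and the correction for $\varphi_1$ would come out $-\sum_{i\ge 2}a_i(hg_i^{-1})$, generically different from $a_1(h) = -\sum_{i\ge 2}a_i(h)$; connectedness of $M$ lets us construct the embedded cubes directly between $D_i$ and $D_1$ inside $M$, forcing $g_i = e$.
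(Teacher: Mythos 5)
Your proposal is correct and follows essentially the same route as the paper: reduce to $\sum_j a_j(h)=0$ via \Cref{lemma:zerof}, then use compactly supported tube forms $(g^{-1})^*\tilde\theta_i^*\nu$ (with $d\nu=\rho$ from \Cref{lemma:poincare}) to move the residual mass between $gD_i$ and $gD_1$ for each fixed $g$ --- exactly the ``exchanges among pairs of subsets sharing the same $g$ but with a different index $i$'' that the paper's proof sketches. The only nitpick is bookkeeping: as written, ``with integrals $+1$ and $-1$ respectively'' assigns $+1$ to the lower face, which negates your displayed identity, but the global minus sign you place in front of $\eta_1$ compensates, so the final computation still closes.
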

	\begin{proof}
		By \Cref{lemma:zerof} we can assume that
		\[ \sum_{i=1}^k\int_M \varphi_i\cdot g^*\omega =  \int_M \varphi \cdot g^*\omega = 0\]
		for every $g \in G$.
		As in the proof of \Cref{lemma:zerof}, we use suitable differential forms in $\bddforms^{n-1}(M,\partial M)$ --- which are pull-backs of $\rho$ --- to ``move mass'' between the subsets $g \cdot D_i$.
		This time, the exchanges are among pairs of such subsets sharing the same $g$, but with a different index $i$.
	\end{proof}
	
	\begin{proof}[Proof of \Cref{thm:isomorphism}]
		We can now conclude the proof that the integration map is injective, hence bijective.
		Let $\omega \in \bddforms^n(M)$ and suppose that $\int_M\omega = 0 \in \ell^\infty(G)_G$.
		We have to find $\eta \in \bddforms^{n-1}(M,\partial M)$ such that $d\eta = \omega$.
		By \Cref{lemma:zeroint} we can assume that
		\[ \int_M \varphi_i \cdot g^*\omega = 0 \]
		for every $g \in G$ and $i \in \{1,\dots,k\}$.
		
		The differential form $\varphi_i \cdot g^*\omega$ is compactly supported in $C_i \subset M$.
		By the suitable version of the Poincaré lemma --- \Cref{lemma:poincare} or \Cref{lemma:poincareb} according to whether $C_i$ has boundary or not --- there is $\eta_i^g \in \Omega^{n-1}(M,\partial M)$ which is compactly supported in $C_i$, with $\norm{\eta}_\infty \le K\cdot\norm{\omega}_\infty$ for some constant $K$ (which depends on the constant $K_n$ in the lemma and on the fixed diffeomorphism between $A_i$ and $Q$ or $Q'$), and $d\eta_i^g = \varphi_i \cdot g^*\omega$.
		We conclude by defining
		\[ \eta = \sum_{i=1}^k\sum_{g \in G} (g^{-1})^*\eta_i^g,\]
		whose exterior differential is
		\[ d\eta = \sum_{i=1}^k\sum_{g \in G} (g^{-1})^*(\varphi_i\cdot g^*\omega) = \sum_{i=1}^k\sum_{g \in G} (g\cdot\varphi_i) \cdot\omega, \]
		which is equal to $\omega$ because the functions $g\cdot \varphi_i$ give a (uniformly locally finite) partition of unity on $M$.
	\end{proof}
	
	\bibliography{bibliography}
\end{document}